\begin{document}
\title[Uniformly monotone operators]
{Examples for uniformly monotone operators arising in nonlinear elliptic and parabolic problems}

\author[\'A. Besenyei]
{\'Ad\'am Besenyei} 

\address{\'Ad\'am Besenyei \newline
Department of Applied Analysis\\
E\"otv\"os Lor\'and University, 
H-1117 Budapest P\'azm\'any P. s. 1/C, Hungary}
\email{badam@cs.elte.hu}

\thanks{Supported by grant OTKA T 049819 from the
Hungarian National Foundation \hfill\break\indent for Scientific Research}
\subjclass[2000]{47H05, 35J60, 35K55}
\keywords{uniformly monotone operators; nonlinear parabolic and \hfill\break\indent elliptic problems}

\date{\today}

\begin{abstract}
We show some examples for uniformly monotone operators arising in weak formulation of nonlinear elliptic and parabolic problems. Besides the classical $p$-Laplacian some other less known examples are given which might be of interest because of applications.
\end{abstract}

\maketitle

\numberwithin{equation}{section}
\newtheorem{theorem}{Theorem}[section]
\newtheorem{proposition}[theorem]{Proposition}
\newtheorem{remark}[theorem]{Remark}
\newtheorem{lemma}[theorem]{Lemma}

\newcommand{\const}{\mathrm{const}}

\hfuzz2pt

\setlength{\arraycolsep}{.13889em}

\frenchspacing

\section{Introduction}

The aim of the present paper is to show several examples for uniformly monotone oprators arising in weak formulation of nonlinear elliptic and parabolic problems. Let $X$ be a normed space and denote by $X^*$ its dual, further, by $\langle\cdot,\cdot\rangle$ the pairing between $X^*$ and $X$. Then, an operator $A\colon X\to X^*$ is called uniformly monotone (following the terminology of \cite{zeidler}) if there exist $p\geq2$, $\gamma>0$ such that 
\begin{equation}\label{unif}
\langle A(u_1)-A(u_2), u_1-u_2\rangle\geq\gamma\cdot \|u_1-u_2\|_X^p.
\end{equation}
for all $u_1,u_2\in X$.
In what follows, we study operators which are obtained by considering the weak formulation of an elliptic or parabolic equation or system with some boundary conditions, see, e.g., \cite{bes1, bes2, simon}. Namely, in the elliptic case let $X$ be a linear subspace of $W^{1,p}(\Omega)$, where $\Omega\subset\mathbb{R}^n$ is bounded (with sufficiently smooth boundary), $p\geq2$, and consider operator $A\colon X\to X^*$ defined by
\begin{equation}\label{oper}
\langle A(u),v\rangle=\int_\Omega \Bigl(\sum_{i=1}^na_i(x,u(x),Du(x))D_iv(x)+a_0(x,u(x),Du(x))v(x)\Bigr)\,dx,
\end{equation}
where $D_i$ denotes the distributional derivative with respect to the $i$-th variable and $D=(D_1,\dotsc,D_n)$. 

The parabolic case is a minor modification of this, $X$ may be chosen as $L^p(0,T; V)$ (see, e.g., \cite{zeidler}), where $V$ is a linear subspace of $W^{1,p}(\Omega)$ and $0<T<\infty$, further, functions $a_i$ may depend on variable $t$, and in \eqref{oper} one integrates on $(0,T)\times \Omega$.

Supposing the uniform monotonicity (and some other conditions) for an operator like this, one can prove uniqueness of weak solutions, continuous dependence of the solutions on datas and for parabolic equations one can obtain results on stabilization of weak solutions, see, e.g., \cite{bes3, bes4, simon}. 

The well known example for such operator is $a_i(x,\xi)=\xi_i|(\xi_1,\dotsc,\xi_n)|^{p-2}$ ($i=1,\dotsc,n)$, $a_0(x,\xi)=\xi_0|\xi_0|^{p-2}$ where $\xi=(\xi_0,\xi_1,\dotsc,\xi_n)$ refers to $(u,D_1u,\dotsc, D_nu)$. In this case, assuming homogeneous boundary condition, Gauss's theorem yields that operator \eqref{oper} is the weak form of operator $u\mapsto\Delta_p u+u|u|^{p-2}$ where 
\[\Delta_pu=\mathrm{div}\,(\mathrm{grad}\,u|\mathrm{grad}\,u|^{p-2})\] 
is the $p$-Laplacian. Note that in case $X=W_0^{1,p}(\Omega)$ (i.e. the closure of $C_0^\infty(\Omega)$ in $W^{1,p}(\Omega)$) $\Delta_p$ is also uniformly monotone since, due to Poincar\'e's inequality, an equivalent norm can be introduced in $W_0^{1,p}(\Omega)$, see \cite{adams}.

In \cite{bes3, bes4} we considered a nonlinear system modelling fluid flow in porous media. In that case functions $a_i$ did not depend on $(\xi_0,\dotsc,\xi_k)$ if $i>k$. The present paper was motivated by such system. We shall show a variety of examples for uniformly monotone operators, including functions $a_i$ of that type. In the next section we shall formulate and prove a result of \cite{dub} which is a sufficient condition for uniform monotonicity and this will be applied to examples in Section 3. For further details on operators of monotone type, see \cite{lions, zeidler}.


\section{Sufficient condition}

Let $\Omega\subset\mathbb{R}^n$ be bounded and $X$ be a linear subspace of $W^{1,p}(\Omega)$ ($p\geq2$) and use the notations introduced in the prevoius section.
We define operator $A\colon X\to X^*$ by the formula \eqref{oper}.
A weak form of an elliptic problem may be written as $A(u)=F$
where $F\in X^*$ (for example $\langle F,v\rangle =\int_\Omega f(x)v(x)\,dx$ with some $f\in L^q(\Omega)$ where $\frac1p+\frac1q=1$). (In the parabolic case one has the form $D_tu+Au=F$, where $D_t$ denotes the distributional derivative with respect to the variable $t$, further, $X$ and $A$ are modified as explained in the introduction.)
This kind of abstract problems have an extended classical theory (see, e.g., \cite{lions, zeidler}). Existence and uniqueness of solutions can be guaranteed by supposing the following conditions:

\begin{enumerate}
\item[(A1)] Functions $a_i\colon \Omega\times\mathbb{R}^{n+1}\to\mathbb{R}$ ($i=0,\dotsc,n$) are of Carath\'eodory property type, i.e., $a_i(x,\xi)$ is measurable in $x\in\Omega$ for all $\xi\in\mathbb{R}^{n+1}$, and continuous in $\xi\in\mathbb{R}^{n+1}$ for a.a. $x\in\Omega$.

\item[(A2)] There exist a constant $c>0$ and a function $k\in L^q(\Omega)$ such that for a.a. $x\in\Omega$ and all $\xi\in\mathbb{R}^{n+1}$ 
\[|a_i(x,\xi)|\leq c\cdot |\xi|^{p-1}+k(x),\qquad i=0,\dotsc, n.\]

\item[(A3)] There exists a constant $C>0$ such that for a.a. $x\in\Omega$ and all $\xi,\Tilde\xi\in\mathbb{R}^n$
\[
\sum_{i=0}^n(a_i(x,\xi)-a_i(x,\Tilde\xi))(\xi_i-\Tilde\xi_i)\geq C\cdot |\xi-\Tilde \xi|^p.
\]

\end{enumerate}

Clearly, the third condition implies \eqref{unif} for all $u_1,u_2\in X$ with $\gamma=C$ thus (A3) ensures the uniform monotonicity of operator $A$.

Now we recall a result of \cite{dub} which is a sufficient condition on functions $a_i$ which guarantees condition (A3).

\begin{proposition}\label{suff} Suppose that $p\geq2$ and $a_i$ is continuously differentiable in variable $\xi$ for all $i=0,\dotsc,n$. Further, assume that there exists a constant $\delta>0$ such that for a.a. $x\in\Omega$, all $\xi\in\mathbb{R}^{n+1}$ and all $(z_0,\dotsc,z_n)\in\mathbb{R}^{n+1}$,
\begin{equation}\label{suff2}
\sum_{j=0}^n\sum_{i=0}^nD_ja_i(x,\xi)z_iz_j\geq\delta\cdot\sum_{i=0}^n |\xi_i|^{p-2}z_i^2.
\end{equation}
Then condition (A3) holds.
\end{proposition}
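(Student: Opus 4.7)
The strategy is to reduce (A3) to a scalar inequality via the fundamental theorem of calculus. Fix $x\in\Omega$, $\xi,\tilde\xi\in\mathbb{R}^{n+1}$, and set $\xi(t):=\tilde\xi+t(\xi-\tilde\xi)$. Since each $a_i(x,\cdot)$ is $C^1$,
\[
a_i(x,\xi)-a_i(x,\tilde\xi)=\int_0^1\sum_{j=0}^n D_j a_i(x,\xi(t))(\xi_j-\tilde\xi_j)\,dt.
\]
Multiplying by $\xi_i-\tilde\xi_i$, summing over $i$, and applying the hypothesis \eqref{suff2} pointwise in $t$ with $z_i:=\xi_i-\tilde\xi_i$ yields
\[
\sum_{i=0}^n(a_i(x,\xi)-a_i(x,\tilde\xi))(\xi_i-\tilde\xi_i)\geq\delta\int_0^1\sum_{i=0}^n\bigl|\tilde\xi_i+t(\xi_i-\tilde\xi_i)\bigr|^{p-2}(\xi_i-\tilde\xi_i)^2\,dt.
\]

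The integral splits into $n+1$ independent one-dimensional integrals, each of which can be evaluated in closed form: for fixed $i$, the substitution $u=\tilde\xi_i+t(\xi_i-\tilde\xi_i)$ gives
\[
\int_0^1\bigl|\tilde\xi_i+t(\xi_i-\tilde\xi_i)\bigr|^{p-2}(\xi_i-\tilde\xi_i)^2\,dt=\frac{(\xi_i-\tilde\xi_i)\bigl(\xi_i|\xi_i|^{p-2}-\tilde\xi_i|\tilde\xi_i|^{p-2}\bigr)}{p-1}.
\]
The proposition therefore comes down to the scalar inequality that, for $p\geq 2$, there exists $c_p>0$ such that $(b|b|^{p-2}-a|a|^{p-2})(b-a)\geq c_p|b-a|^p$ for all $a,b\in\mathbb{R}$. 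This is the main obstacle, because the map $s\mapsto s|s|^{p-2}$ is only $C^1$ (not analytic) through the origin when $2\leq p<3$, so one cannot Taylor-expand around sign changes. I would prove it by homogeneity: dividing both sides by $|b-a|^p$ and normalizing $b-a=1$, the question reduces to showing that the continuous function $a\mapsto (a+1)|a+1|^{p-2}-a|a|^{p-2}$ is strictly positive on $\mathbb{R}$ and has a positive limit as $|a|\to\infty$ (by a Taylor expansion of $(1+1/a)^{p-1}$ at infinity), hence is bounded below.

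Assembling the coordinatewise bounds gives
\[
\sum_{i=0}^n(a_i(x,\xi)-a_i(x,\tilde\xi))(\xi_i-\tilde\xi_i)\geq\frac{\delta c_p}{p-1}\sum_{i=0}^n|\xi_i-\tilde\xi_i|^p.
\]
Finally, since $p\geq 2$, the power-mean inequality supplies a constant $\kappa_{n,p}>0$ with $\sum_{i=0}^n|\xi_i-\tilde\xi_i|^p\geq\kappa_{n,p}|\xi-\tilde\xi|^p$, so (A3) holds with $C:=\delta c_p\kappa_{n,p}/(p-1)$.
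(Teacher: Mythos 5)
Your argument is correct, and its first half (fix $x,\xi,\tilde\xi$, integrate $D_ja_i$ along the segment, apply \eqref{suff2} with $z=\xi-\tilde\xi$) is exactly the paper's proof. Where you diverge is in how the resulting one-dimensional integrals are bounded below: the paper invokes its Lemma on $\int_0^1|a+\tau b|^{s}\,d\tau\geq |b|^s/\bigl(2^s(s+1)\bigr)$ (inequality \eqref{intineq}) to estimate the integral directly, while you evaluate each integral in closed form as $\frac{1}{p-1}(\xi_i-\tilde\xi_i)\bigl(\xi_i|\xi_i|^{p-2}-\tilde\xi_i|\tilde\xi_i|^{p-2}\bigr)$ and then appeal to the scalar monotonicity inequality $(b|b|^{p-2}-a|a|^{p-2})(b-a)\geq c_p|b-a|^p$, proved by homogeneity plus a compactness/coercivity argument. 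The two routes are in fact equivalent: after your exact evaluation, the paper's Lemma with $s=p-2$ \emph{is} your scalar inequality with the explicit (and sharp) constant $c_p=2^{2-p}$, whereas your argument yields only the existence of some $c_p>0$ (also, a small imprecision: for $p>2$ the normalized function tends to $+\infty$, not to a finite positive limit, though this only strengthens the lower bound you need). What your write-up buys in return is care about the final constant: you correctly insert the power-mean factor $\kappa_{n,p}=(n+1)^{1-p/2}$ to pass from $\sum_i|\xi_i-\tilde\xi_i|^p$ to $|\xi-\tilde\xi|^p$, a step the paper's displayed chain glosses over (its last line jumps from the coordinatewise bound to $|\xi-\tilde\xi|^p$ with the constant $\delta/\bigl(2^{p-2}(p-1)\bigr)$, which as written needs this dimensional factor). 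Note finally that the proposition's conclusion is the pointwise condition (A3); the paper's closing integration to uniform monotonicity of $A$ is a remark beyond the statement, so your stopping point is the right one.
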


To prove this assertion we shall apply the following inequality \eqref{intineq} from \cite{dub}. For the convenience we present the proof of it below.

\begin{lemma} Let $a, b$ be arbitrary and $s\geq 0$ real numbers. Then 
\begin{equation}\label{intineq}
\int_0^1|a+\tau b|^s\, d\tau\geq
\frac{|b|^s}{2^s(s+1)}.
\end{equation}
\end{lemma}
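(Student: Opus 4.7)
The plan is to reduce the inequality to a one-variable estimate on the real line via the substitution $u=a+\tau b$ (the case $b=0$ being trivial since both sides vanish). Writing $c=\min(a,a+b)$ and $d=\max(a,a+b)$, so that $d-c=|b|$, this change of variables gives $\int_0^1|a+\tau b|^s\,d\tau=\frac{1}{|b|}\int_c^d|u|^s\,du$, and the claim reduces to showing
\[
\int_c^d |u|^s\,du \geq \frac{|b|^{s+1}}{2^s(s+1)}
\]
for every interval $[c,d]\subset\mathbb{R}$ of length $|b|$.

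I would then split into two cases according to whether $0\in[c,d]$. In the ``off-center'' case $0\notin[c,d]$, by evenness of $|u|^s$ one may assume $c\geq 0$, in which case the integral equals $\bigl((c+|b|)^{s+1}-c^{s+1}\bigr)/(s+1)$; since this expression is nondecreasing in $c\geq 0$ (its derivative being $(s+1)\bigl[(c+|b|)^s-c^s\bigr]\geq 0$ for $s\geq 0$), it is minimized at $c=0$ and yields the bound $|b|^{s+1}/(s+1)$, which suffices because $2^s\geq 1$. In the ``straddling'' case $0\in[c,d]$, I would parametrize $[c,d]=[-\alpha,\beta]$ with $\alpha,\beta\geq 0$ and $\alpha+\beta=|b|$; the integral then equals $(\alpha^{s+1}+\beta^{s+1})/(s+1)$, and Jensen's inequality for the convex function $x\mapsto x^{s+1}$ on $[0,\infty)$ (convex precisely because $s+1\geq 1$) gives
\[
\alpha^{s+1}+\beta^{s+1}\;\geq\;2\Bigl(\frac{\alpha+\beta}{2}\Bigr)^{s+1}\;=\;\frac{|b|^{s+1}}{2^s},
\]
which is exactly the required bound.

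The argument is elementary and I do not foresee a genuine obstacle. The central observation is simply that the constant $1/(2^s(s+1))$ is sharp, attained in the symmetric configuration $a=-b/2$ (equivalently $[c,d]=[-|b|/2,|b|/2]$), which is precisely the equality case of the Jensen step used in the straddling case; this is also the only place where the hypothesis $s\geq 0$ (i.e.\ $s+1\geq 1$) is essential.
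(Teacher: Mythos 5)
Your proof is correct and follows essentially the same route as the paper: both split according to whether the zero of $\tau\mapsto a+\tau b$ lies in the integration range, evaluate the integral exactly in the straddling case, and conclude with the convexity (power--mean) inequality $\alpha^{s+1}+\beta^{s+1}\geq 2^{-s}(\alpha+\beta)^{s+1}$, which you phrase as Jensen after the substitution $u=a+\tau b$ and the paper applies directly to $(-a)^{s+1}+(a+1)^{s+1}$ after normalizing $b=1$. The remaining differences are cosmetic (your monotonicity-in-$c$ argument versus the paper's pointwise bound in the non-straddling case), so no further comparison is needed.
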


\begin{proof} The case $b=0$ is obvious. Now let $b\neq0$, then by homogenity suffices to verify the inequality for $b=1$.
Further, without loss of generality we may assume $a<0$. We have to cases. If $a+1>0$, then by dividing interval $[0,1]$ into two parts with respect to the sign of $c+\tau$, and by integrating separately we obtain
\begin{align*}
\int_0^1|a+\tau|^s \,d\tau&=
\int_0^{-a}(-a-\tau)^s\, d\tau+
\int_{-a}^1(a+\tau)^s \, d\tau\\&=
\frac1{s+1}\left((-a)^{s+1}+(a+1)^{s+1}\right)\\&\geq
\frac{1}{2^s(s+1)}.
\end{align*}
In the last estimate we used inequality $|\alpha+\beta|^{s+1}\leq2^{-s}\left(|\alpha|^{s+1}+|\beta|^{s+1}\right)$. In the other case $a+\tau$ is negative for all $\tau\in [0,1]$ thus
\[\int_0^1|a+\tau|^s \,d\tau\geq\int_0^1|\tau|^s
\,d\tau=
\frac{1}{s+1}
\geq\frac1{2^s(s+1)}.\] 
Note that \eqref{intineq} is sharp, we have equality if $\dfrac{a}{b}=-\dfrac12$.
\end{proof}

Now we prove Proposition \ref{suff}. We follow the proof of \cite{dub}.

\begin{proof}
Fix $x\in\Omega$, $\xi,\Tilde\xi \in \mathbb{R}^{n+1}$ and
define functions $f_i\colon[0,1]\to\mathbb{R}$ by $f_i(\tau)=a_i(x,\Tilde\xi+\tau(\xi-\Tilde\xi))$ ($i=0,\dotsc,n)$. Then by applying assumption \eqref{suff2} and inequality \eqref{intineq} we may deduce
\begin{align*}
\sum_{i=0}^n(a_i(x,\xi)-a_i(x,\Tilde\xi))(\xi_i-\Tilde\xi_i)&=\sum_{i=0}^n(f_i(1)-f_i(0))(\xi_i-\Tilde\xi_i)\\&=\sum_{i=0}^n\int_0^1\sum_{j=0}^nD_ja_i(\Tilde\xi+\tau(\xi-\Tilde\xi))(\xi_j-\Tilde\xi_j)(\xi_i-\Tilde\xi_i)\,d\tau\\&\geq\delta\cdot\sum_{i=0}^n\int_0^1|\Tilde\xi+\tau(\xi-\Tilde\xi)|^{p-2}(\xi_i-\Tilde\xi_i)^2d\tau\\&\geq
\dfrac{\delta}{2^{p-2}(p-1)}|\xi-\Tilde\xi|^p.
\end{align*}
Whence after integration we conclude 
\[\langle A(u_1)-A(u_2),u_1-u_2\rangle\geq\displaystyle \dfrac{\delta}{2^{p-2}(p-1)}\|u_1-u_2\|_X^p.\]Thus condition A3 holds with $\displaystyle C=\dfrac{\delta}{2^{p-2}(p-1)}$.
\end{proof}


\section{Examples}

Now we show some examples of uniformly monotone operators which fulfil also conditions (A1), (A2). For simplicity, we consider examples not depending on variable $x$. In the sequel we always suppose $p\geq 2$.

\par \textit{Example 1}\, Let $a_i(\xi)=\xi_i|\xi_i|^{p-2}$ ($i=0,\dotsc,n$). Then 
\[\langle A(u),v\rangle =\int_\Omega\left(\sum_{i=1}^n D_iuD_iv|D_iu|^{p-2}+uv|u|^{p-2}\right)\,dx.\] 
Note that functions $a_i$ obviously fulfil conditions (A1), (A2).  
Now simple calculations yield $D_ia_i(\xi)=(p-1)|\xi_i|^{p-2}$ and $D_ja_i(\xi)=0$ ($j\neq i$). Hence
\[\sum_{j=0}^n\sum_{i=0}^nD_ja_i(\xi)z_iz_j=(p-1)\sum_{i=0}^n|\xi_i|^{p-2}z_i^2\]
thus, by Proposition \ref{suff}, condition (A3) holds as well.


\par \textit{Example 2}\, Now let $a_i(\xi)=\xi_i|(\xi_1,\dotsc,\xi_n)|^{p-2}$ for $i=1,\dotsc, n$ and $a_0(\xi)=\xi_0|\xi_0|^{p-2}$. In this case \[\langle A(u),v \rangle=\int_\Omega\Bigl(\sum_{i=1}^n D_iuD_iv|Du|^{p-2}+uv|u|^{p-2}\Bigr),\] 
i.e., $A$ is the weak form of operator $u\mapsto\Delta_p u+u|u|^{p-2}$ mentioned in the introduction. Obviously, functions $a_i$ satisfy conditions (A1), (A2). 
It is easy to see that 
\begin{center}
\[
\left\{\begin{array}{rcll}
D_ja_i(\xi)&=&(p-2)\xi_j\xi_i|(\xi_1,\dotsc,\xi_n)|^{p-4},& \mbox{ for } i,j>0,\, i\neq j;\\ 
D_ia_i(\xi)&=&|(\xi_1,\dotsc,\xi_n)|^{p-2}+(p-2)\xi_i^2|(\xi_1,\dotsc,\xi_n)|^{p-4},& \mbox{ for } i>0;\\
D_ja_0(\xi)&=&D_0a_i(\xi)=0,& \mbox{ for } j>0, i>0;\\
D_0a_0(\xi)&=&(p-1)|\xi_0|^{p-2}.&
\end{array}\right.\]
\end{center}
Hence
\begin{align*}
\sum_{j=0}^n\sum_{i=0}^nD_ja_iz_iz_j&=\sum_{i=1}^n|(\xi_1,\dotsc,\xi_n)|^{p-2}z_i^2+(p-1)|\xi_0|^{p-2}z_0^2\\&\quad+(p-2)|(\xi_1,\dotsc,\xi_n)|^{p-4}\cdot\sum_{j=1}^n\sum_{i=1}^n\xi_i\xi_jz_iz_j\\&=\sum_{i=1}^n|(\xi_1,\dotsc,\xi_n)|^{p-2}z_i^2+(p-1)|\xi_0|^{p-2}z_0^2\\&\quad+(p-2)|(\xi_1,\dotsc,\xi_n)|^{p-4}\cdot\left(\sum_{i=1}^n\xi_iz_i\right)^2\\&\geq\sum_{i=0}^n|\xi_i|^{p-2}z_i^2
\end{align*}
thus from Proposition \ref{suff} it follows that $A$ is uniformly monotone.


\par \textit{Example 3}\, Let $a_i(\xi)=\xi_i|\xi|^{p-2}+g_i(\xi)$ ($i=0,\dotsc, n)$, where functions  $g_i\colon\mathbb{R}^{n+1}\to\mathbb{R}$ are continuous, further, there exist positive constants $c,\,\varepsilon$ such that 
\begin{equation}\label{gprop}
|g_i(\xi)|\leq c\cdot |\xi|^{p-1}\quad \mbox{ and }\quad
|D_jg_i(\xi)|< \dfrac1{n+1+\varepsilon}\cdot |\xi|^{p-2}
\end{equation}
for all $\xi=(\xi_0,\dotsc,\xi_n)\in\mathbb{R}^{n+1}$ ($i,j=0,\dotsc,n$). It is clear that conditions (A1), (A2) hold. By using Example 2 and the inequality $|\alpha\beta|\leq\dfrac12(\alpha^2+\beta^2)$ one obtains 
\begin{align*}
\sum_{j=0}^n\sum_{i=0}^nD_ja_i(\xi)z_iz_j&\geq\sum_{i=0}^n|\xi|^{p-2}z_i^2-\dfrac12\sum_{j=0}^n\sum_{i=0}^n|D_jg_i(\xi)|(z_i^2+z_j^2)\geq\\&\geq\sum_{i=0}^n|\xi|^{p-2}z_i^2-(n+1)\sum_{i=0}^n\frac1{n+1+\varepsilon}|\xi|^{p-2}z_i^2\\&=\sum_{i=0}^n\frac{\varepsilon}{n+1+\varepsilon} |\xi|^{p-2}z_i^2
\end{align*}
which implies condition (A3). As an example for functions $g_i$ with the properties \eqref{gprop}, consider, e.g.,   \[g_i(\xi)=\dfrac{1}{(n+1+\varepsilon)\cdot\max\{\alpha_j,j=0,\dotsc,n\}}\prod_{j=0}^n|\xi_j|^{\alpha_j}\] 
where $\alpha_j\geq0$ for all $j=0,\dotsc, n$ and $\displaystyle\sum_{j=0}^n\alpha_j=p-1$.


\par \textit{Example 4} Now we show example for the system considered in \cite{bes3, bes4} which was mentioned in the introduction. Suppose $2\leq p\leq 4$ and let $a_i(\xi)=\xi_i|\xi|^{p-2}$ for $0\leq i\leq k\leq n$ and $a_i(\xi)=\xi_i|(\xi_{k+1},\dotsc,\xi_n)|^{p-2}$ for $k<i\leq n $.
We show that these functions fulfil condition (A3) ((A1) and (A2) obviously hold). Now for brevity let $\zeta=(\xi_{k+1},\dotsc,\xi_n)$. 
Clearly,
\begin{center}
\[\left\{
\begin{array}{rcll}
D_ja_i(\xi)&=&(p-2)\xi_i\xi_j|\xi|^{p-4},& \mbox{ for } 0\leq i\leq k, 0\leq j\leq n, j\neq i;\\
D_ja_i(\xi)&=&(p-2)\xi_i\xi_j|\zeta|^{p-4},& \mbox{ for } k< i\leq n, k<j<n, j\neq i;\\
D_ja_i(\xi)&=&0,& \mbox{ for } k< i\leq n, 0\leq j\leq k\\
D_ia_i(\xi)&=&|\xi|^{p-2}+(p-2)\xi_i^2|\xi|^{p-4},& \mbox{ for } 0\leq i\leq k;\\
D_ia_i(\xi)&=&|\zeta|^{p-2}+(p-2)\xi_i^2|\zeta|^{p-4},& \mbox{ for } k<i\leq n.
\end{array}\right.
\]
\end{center}
Then
\begin{align*}
&\sum_{j=0}^n\sum_{i=0}^nD_ja_i(\xi)z_iz_j\\&=\sum_{i=0}^k|\xi|^{p-2}z_i^2+(p-2)|\xi|^{p-4}\sum_{j=0}^n\sum_{i=0}^k\xi_i\xi_jz_iz_j\\&\quad+\sum_{i=k+1}^n|\zeta|^{p-2}z_i^2+(p-2)|\zeta|^{p-4}\sum_{j=k+1}^n\sum_{i=k+1}^n\xi_i\xi_jz_iz_j\\&=
\sum_{i=0}^k|\xi|^{p-2}z_i^2+\sum_{i=k+1}^n|\zeta|^{p-2}z_i^2+(p-2)|\xi|^{p-4}\left(\sum_{i=0}^k\xi_iz_i\right)^2\\&\quad+(p-2)|\zeta|^{p-4}\left(\sum_{i=k+1}^n\xi_iz_i\right)^2+(p-2)|\xi|^{p-4}\sum_{j=k+1}^n\sum_{i=0}^k\xi_i\xi_jz_iz_j.
\end{align*}
By using the estimate
\begin{align*}
\sum_{j=k+1}^n\sum_{i=0}^k\xi_i\xi_jz_iz_j&=\left(\sum_{j=k+1}^n\xi_jz_j\right)\left(\sum_{i=0}^k\xi_iz_i\right)\\&\geq-\dfrac12\left(\sum_{i=k+1}^n\xi_iz_i\right)^2-\dfrac12\left(\sum_{i=0}^k\xi_iz_i\right)^2.
\end{align*}
and the fact that $|\zeta|^{p-4}\geq|\xi|^{p-4}$ (since $p\leq4$) we conclude 
\begin{align*}
&\sum_{j=0}^n\sum_{i=0}^nD_ja_i(\xi)z_iz_j\\&=
\sum_{i=0}^k|\xi|^{p-2}z_i^2+\sum_{i=k+1}^n|\zeta|^{p-2}z_i^2+\dfrac12(p-2)|\xi|^{p-4}\left(\sum_{i=k+1}^n\xi_iz_i\right)^2\\&\quad+\dfrac12(p-2)|\xi|^{p-4}\left(\sum_{i=0}^k\xi_iz_i\right)^2\geq\sum_{i=0}^n|\xi_i|^{p-2}z_i^2.
\end{align*}
Now Proposition \ref{suff} yields the uniform monotonicity of $A$.

\par In case $p>4$ one may consider, e.g., the following functions: 
\setlength{\arraycolsep}{.13889em}
\begin{eqnarray*}
a_i(\xi)&=&\xi_i|(\xi_0,\dotsc,\xi_k)|^{p-2}+\xi_i|\xi|^{r-2} \;\mbox{ if }\; 0\leq i\leq k\leq n,\\ a_i(\xi)&=&\xi_i|(\xi_{k+1},\dotsc,\xi_n)|^{p-2}+\xi_i|(\xi_{k+1},\dotsc,\xi_n)|^{r-2} \;\mbox{ if }\; k<i<n,
\end{eqnarray*}
where $2\leq r\leq4$. 
By using the previous examples it is not difficult to show that these functions satisfy condition (A3).


\par \textit{Example 5}\, Now let $a_i(\xi)=\xi_i|\xi_i|^{p-2}+\displaystyle\prod_{k=0}^n\xi_k|\xi_k|^{p-2}\cdot h_i(\xi)$ ($i=0,\dotsc,n$) where functions $h_i\colon\mathbb{R}^{n+1}\to\mathbb{R}$ are differentiable and have compact support $S_i$ ($i=0,\dotsc,n$), denote $\displaystyle S=\bigcup_{i=0}^nS_i$. Further, let \[\alpha=\displaystyle p\max\left\{\sup_{\xi\in S}|\xi|^{(n+1)(p-1)},1\right\}\cdot\max\left\{\sup_S|h|,\sup_S|Dh|\right\}.\]
We show that if $\alpha$ is sufficiently small then functions $a_i$ fulfil condition (A3) ((A1) is obvious and due to the compact supports (A2) is also satisfied).
Observe that 
\[D_ia_i(\xi)=(p-1)|\xi_i|^{p-2}+(p-1)|\xi_i|^{p-2}\prod_{k\neq i}\xi_k|\xi_k|^{p-2}\cdot h_i(\xi)+\prod_{k=0}^n\xi_k|\xi_k|^{p-2}\cdot D_i h(\xi)\]
thus $D_ia_i(\xi)\geq (p-1)|\xi_i|^{p-2}-\alpha|\xi_i|^{p-2}$. In addition for $j\neq  i$
\[D_ja_i(\xi)=(p-1)|\xi_j|^{p-2}\prod_{k\neq j}\xi_k|\xi_k|^{p-2}\cdot h_i+\prod_{k=0}^n\xi_k|\xi_k|^{p-2}\cdot D_jh_i(\xi)\]
thus $|D_ja_i(\xi)|\leq \alpha\cdot |\xi_i|^{p-2}$ and $|D_ja_i(\xi)|\leq \alpha\cdot |\xi_j|^{p-2}$ hence \[|D_ja_i(\xi)z_iz_j|\leq\alpha\cdot\left(|\xi_i|^{p-2}z_i^2+|\xi_j|^{p-2}z_j^2\right).\] 
Therefore,
\begin{align*}
\sum_{j=0}^n\sum_{i=0}^nD_ja_i(\xi)z_iz_j&\geq (p-1-\alpha)\sum_{i=0}^n|\xi_i|^{p-2}z_i^2-\alpha\sum_{j=0}^n\sum_{i=0}^n\left(|\xi_i|^{p-2}z_i^2+|\xi_j|^{p-2}z_j^2\right)\\&\geq(p-1-\alpha)\sum_{i=0}^n|\xi_i|^{p-2}z_i^2-2n\alpha\sum_{i=0}^n|\xi_i|^{p-2}z_i^2\\&=(p-1-(2n+1)\alpha)\sum_{i=0}^n|\xi_i|^{p-2}z_i^2.
\end{align*}
Hence \eqref{suff2} holds provided $\alpha$ is sufficiently small which implies condition (A3).


\small{

\end{document}
\begin{thebibliography}{10}

\bibitem{adams}R.~A.~Adams, \textit{Sobolev spaces}, Academic
Press, New York~-~San Francisco~-~London, 1975.

\bibitem{bes1}\'A. Besenyei, On systems of parabolic functional
differential equations, \textit{Annales Univ.~
Sci.~Budapest}, \textbf{47} (2004), 143-160.

\bibitem{bes2}\'A. Besenyei, \textit{Existence of solutions of a nonlinear system modelling fluid flow in porous media}, Electron. J. Diff. Eqns., Vol. 2006(2006), No. 153, pp. 1-19., 

\bibitem{bes3}\'A. Besenyei, Stabilization of solutions to a nonlinear system modelling fluid flow in porous media, \textit{Annales Univ. Sci. Budapest. E\"otv\"os Sect. Math.}, \textbf{49} (2006), 115--136.

\bibitem{bes4}\'A. Besenyei, On a nonlinear system containing nonlocal terms related to a fluid flow model, \textit{E. J. Qualitative Theory of Diff. Equ., Proc. 8'th Coll. Qualitative Theory of Diff. Equ.}, No. 3. (2008), 1--13. 

\bibitem{dub} Yu. A. Dubinskiy, Nonlinear elliptic and parabolic equations (in Russian), in: Modern problems in mathematics, Vol. 9., Moscow, 1976.

\bibitem{lions}J.~L.~Lions, \textit{Quelques méthodes de résolution des
probl\`emes aux limites non linéaires}, Dunod, Gauthier-Villars,
Paris, 1969.

\bibitem{simon}L. Simon, Application of monotone type operators to parabolic and functional parabolic PDE's, In: C. M. Dafermos, M. Pokorn\'y (eds), \textit{Handbook of Differential Equations: Evolutionary Equations}, vol 4., North-Holland, Amsterdam, 2008., 267--321.


\bibitem{zeidler}E.~Zeidler, \textit{Nonlinear functional analysis
and its applications II}, Springer, 1990.

\end{thebibliography}
